\renewcommand{\a}{\alpha}
\renewcommand{\th}{\theta}
\renewcommand{\l}{\lambda}
\newcommand{\n}{\nu}
\newcommand{\ph}{\phi}
\def\ph{\phi}
\def\md#1{\ \mbox{\rm(mod }{#1})}
\def\npp#1{N_{\ph}^+(#1)}
\def\ol{\overline}
\def\ph{\phi}
\newcommand{\Q}{{\mathbb Q}}
\newcommand{\Z}{{\mathbb Z}}
\newcommand{\N}{{\mathbb N}}
\newcommand{\F}{{\mathbb F}}
\def\md#1{\ \mbox{\rm(mod }{#1})}
\def\npp#1{N_{\ph}^+(#1)}
\newcommand{\aF}{\mathfrak a}
\newcommand{\pF}{\mathfrak p}
\newtheorem{theorem}{Theorem}[section]
\newtheorem{lemma}[theorem]{Lemma}
\newtheorem{corollary}[theorem]{Corollary}
\theoremstyle{definition}
\newtheorem{definitions}[theorem]{Definitions}
\theoremstyle{remark}
\newtheorem{example}[theorem]{Example}
\newtheorem{remark}[theorem]{Remark}
\newtheorem{examples}[theorem]{Examples}
\begin{document}
\title[]{On  common index divisors and monogenity of certain  number fields defined by trinomials of type  $x^{2^r}+ax^m+b$}

\textcolor[rgb]{1.00,0.00,0.00}{}
\author{  Hamid Ben Yakkou}\textcolor[rgb]{1.00,0.00,0.00}{}
\address{Faculty of Sciences Dhar El Mahraz, P.O. Box  1874 Atlas-Fes , Sidi mohamed ben Abdellah University,  Morocco}\email{beyakouhamid@gmail.com}
\keywords{Number field, trinomial, power integral basis, Monogenity,  Theorem of Ore, prime ideal factorization, common index divisor, Newton polygon} \subjclass[2020]{11R04,
11R16, 11R21, 11Y40}
%\date{\today}
\maketitle
\vspace{0.3cm}
\begin{abstract}  
 Let $K = \Q(\th)$ be a  number with $\th$ a root  of an  irreducible trinomial  of  type $ F(x)= x^{2^r}+ax^m+b \in \Z[x]$.   In this paper,    based on the $p$-adic Newton polygon techniques applied on decomposition of primes in number fields and the classical index theorem of Ore \cite{Narprime, O},  we study the monogenity of $K$. More precisely, we prove that if $a$ and $1+b$ are both divisible by $32$, then $K$ cannot be monogenic. For $m=1$, we  provide   explicit conditions on $a$, $b$ and  $r$ for which $K$ is not monogenic. We also construct a family of irreducible trinomials  which are not monogenic, but their roots generate monogenic number fields. To illustrate our results, we give some computational examples.
\end{abstract}
\maketitle
%\tableofcontents
%\newpage
\section{Introduction and statements of results}
Let $K$ be a number field  generated by a   root   $\th$ of a monic irreducible  polynomial $F(x) \in \Z[x]$ of degree $n$ and   $\Z_K$   its  ring of  integers. The field $K$ is called monogenic if there exists  a primitive element $\eta \in \Z_K$ such that $\Z_K= \Z[\eta]$, that is $(1, \eta, \ldots, \eta^{n-1})$ is an integral basis (called a power integral basis) in $K$. 
The problem of studying the   monogenity of number fields and constructing power integral bases is one of the most important problems in algebraic number theory. This problem is intensively studied by several  researchers in the last four decades (cf. \cite{ANH6IJNT,BGGy6,BerEVGymultiply,R,EG,G19, Gyoryrsurlespolynomes,Gyoryredecide,JP,LN,PP}).

In a series of his papers \cite{Gyoryrsurlespolynomes,Gyoryredecide,GyorySeminarFrensh},  Gy\H{o}ry  provided the first general algorithms for deciding whether $K$ is monogenic or not and for determining all power integral bases in $\Z_K$. He also studied  in  \cite{Gyoryrelative} and  \cite{Gyoryrdiscriminant} the  question of monogenity in  relative extensions. Further, he succeeded to  reduce  index form equations to system of unit equations (see  \cite{Gyory1998}).

For any element  $\eta$ of $\Z_K$, let  $ \mbox{ind}(\eta)$ denote the index of the subgroup  $\mathbb{Z}[\eta]$ in $\Z_K$. The index  of   $K$ is defined as follows:
\begin{eqnarray*}
	i(K) = \gcd \ \{ \mbox{ind}(\eta) \, | \,\eta \in \Z_K \, \mbox{and} \, K= \Q(\eta)  \}.
\end{eqnarray*}  A prime  $p$ dividing $ i(K)$ is called a prime common index divisor of $K$.  Remark that if $K$ is monogenic, then $i(K) = 1$. Thus, a field possessing a  prime common index divisor is not monogenic. 

 The existence of common index divisor was first established by Dedekind. He show that the cubic number field $K=\Q(\th),$ where $\th$ is a root of $x^3+x^2-2x+8$ cannot be monogenic, since the prime    $2$ splits completely in $\Z_K$  (see e.g. \cite[page 64]{Na}).   There is an extensive literature on  indices of number fields, see e.g  \cite{R} by Dedekind,  \cite{Engstrom} by  Engstrom  and  \cite{Nartindex} by Nart.  % \cite{Engstrom} showed that for number fields of degree $n \le 7$, the highest power of  $p$ dividing $i(K)$;   $\n_p(i(K))$, is determined from the form of factorization of the  ideal  $p\mathcal{O}_K$ in  $\mathcal{O}_K$. Engstrom's  results 
%were generalized by Nart \cite{Nartindex}, who developed a $p$-adic characterization of
%the index of a number field. 

  In  \cite{LN}, Llorente and  Nart studied the index  of cubioc number fields defined by $x^3+ax+b$. In \cite{DS}, Davis and Spearman calculated the index of the quartic number field defined by $x^4+ax+b$.  In \cite{GPP41993}, Ga\'{a}l,  Peth\H{o} and Pohst gave efficient algorithm  for quartic number fields.  In \cite{PethoZigler}, Peth\H{o} and Ziegler gave an efficient criterion to decide whether the maximal order of a biquadratic field has a unit power integral basis or not.  In \cite{PP}, Peth\H{o} and Pohst studied  indices in  multiquadratic number fields.
  
   Combining  a refined version  \cite{Gyory1998}   of  the general  approach of \cite{Gyoryrsurlespolynomes, Gyoryredecide}    with an   efficient reduction and enumeration algorithms,   Ga\'al and  Gy\H{o}ry \cite{GGy5},  Bilu, Ga\'al and Gy\H{o}ry \cite{BGGy6} described  algorithms to solve index form equations in quintic resp sextic fields.  In \cite{BerEVGymultiply}, 	 Bérczes,  Evertse and Gy\H{o}ry  studied multiply monogenic orders.  The books \cite{ EG2015,EG} by Evertse and Gy\H{o}ry  gave detailed surveys on the discriminant form and index form theory and its applications, including related Diophantine equations and monogenity of number fields.
   
   Nakahara's research team based their method on the existence of    relative power integral bases of some special sub-fields, they studied the monogenity of several number fields: for example,  they  studied  in \cite{ANH6IJAC, ANH6IJNT}  the monogenity the pure sextic number field $\Q(\sqrt[6]{m})$.   % under the assumption that $m \not \equiv \pm 1 \md{9}$, Ahmad,  Nakahara and  Husnine  \cite{ANH6IJNT}, and Ahmad,  Nakahara  and  Hameed \cite{ANH6IJAC}  showed respectively that the pure sextic number field $\Q(\sqrt[6]{m})$ with $m$ square-free is monogenic when $m \equiv 2, 3 \md{4}$, but it is not monogenic when $m \equiv 1 \md{4}$.  %For a survey on monogenity with a focus on efficient algorithms for several classes of number fields, see the book  \cite{EG,G19}.  
   %In \cite{Smtacta},  Smith studied the monogenity of radical extensions and   gave  sufficient conditions for a  Kummer extension $\Q(\xi_n, \sqrt[n]{\a})$  to be not monogenic.   
   
%    In \cite{Ga21}, Ga\'al studied the monogenity of sextic number fields defined by trinomials of type $x^6+ax^3+b$. 
 % El Fadil gave in \cite{Fcom}  sufficient conditions on $a$ and $b$ so that a rational prime integer $p$ is a common index divisor of number fields defined by $x^5+ax^2+b$.
   Ben Yakkou \cite{BATCA1} and Ben Yakkou with Boudine \cite{BBAMH8}  studied  indices and  monogenity of  number fields defined by $x^8+ax+b$. In \cite{JonesASM2021}, Jones gave infinte families of non-monogenic trinomails. Also  in \cite{JP}, Jones and Phillips identify   classes  of monogenic trinomilas.
  % Also, in \cite{Jones}, Jones construct infinite families of non-monogenic trinomials.

The purpose of this paper is to  study  the monogenity  of the  number  field  $K= \Q(\th)$ with   $\th $ a root  of an irreducible   trinomial of type  $F(x)= x^{2^r} +ax^m+b\in \Z[x]$. Recall that  in \cite{BFT1} Ben Yakkou and El Fadil studied the non-monogenity of number fields defined by $x^n+ax+b$. More precisely, they gave sufficient conditions on $a, b$ and $m$ for which $K$  admits an odd prime  common index divisor. Theses results were generalized in \cite{BRM} by Ben Yakkou for number fields defined by $x^n+ax^m+b$.    Also, in \cite{JKS}, Jakhar,   Khanduja and  Sangwan  studied the problem of the integral closedness of $\Z[\th]$: they   gave necessary and sufficient conditions for a prime $p$ to be a  divisor of the index $\mbox{ind}(\theta)$.  However, by Definition  of $i(K)$, the divisibility of $\mbox{ind}(\theta)$ by   $p$ is not sufficient to decide if  $p$ is a common index divisor of $K$ or not. Therefore, their results does not characterize  the prime divisors of indices of these number fields.  Therefore, the  results obtained in \cite{BRM, BFT1, JKS}  cannot give a complete  answer about the monogenity of number fields defined  by $x^{2^r}+ax^m+b$.   For this reason, we have chosen to study this special case separately.\\
%\section{Main Results}
% In  this section, $K=\Q(\th)$ is a number field generated  by a complex  root  $\th$ of a monic irreducible trinomial   $F(x) = x^n + ax^m +b \in \Z[x]$.
Throughout this paper,  for any prime  $p$ and any $t \in \Z$, $\n_p(t)$ stands for the $p$-adic valuation of $t$  and  $t_p:=\frac{t}{p^{\nu_p(t)}}$.   We recall also  that  the discriminant of the trinomial $F(x) = x^n +ax^m +b $ is 
 \begin{eqnarray}\label{disctrinom}
 \Delta(F)= (-1)^{\frac{n(n-1)}{2}} b^{m-1} (n^{n_1} b^{n_1-m_1} - (-1)^{m_1} m^{m_1}(m-n)^{n_1 - m_1}a^{n_1})^{d_0},
 \end{eqnarray}
 where $d_0 = \gcd(n , m)$, $n_1 = \frac{n}{d_0} $ and $m_1= \frac{m}{d_0}$.  Recall also that the polynomial $F(x)$ is called monogenic if $\Z_K = \Z[\th]$.  It is important to note that the monogenity of  the polynomial $F(x)$ implies the monogenity of the field $K$. But the converse is not true.  Let us start with the following theorem, which is in a more general case. It gives infinite parametric families of irreducible non-monogenic trinomials with non-squarefree discriminant, but their roots generate monogenic number fields.
\begin{theorem}\label{mono}\
\\Let $F(x)=x^n+ax^m+b \in \Z[x]$ be a monic polynomial with discriminant $\Delta$. Suppose that there exist a prime  $p$ dividing both $a$ and $b$ such that $\n_p(b)\ge 2$, $\gcd(n, \n_p(b))=1$, $ n \n_p(a) > (n-m) \n_p(b),$ and $\Delta_p$ is square free. Then $F(x)$ is irreducible over $\Q$. Let $K=\Q(\th)$ be a number field with $\th$ a  root of $F(x)$. Then $F(x)$ is not monogenic ($\Z_K \neq \Z[\th]$), but $K$ is monogenic. Moreover, in this case,  $\Z_K=\Z[\a],$  with  $\a=\frac{\th^s}{p^t},$ where   $(s,t)\in \N^2$  is  the unique positive  solution of the Diophantine equation $\n_p(b)s-nt=1$ with $0\le s <n$.
\end{theorem}
\begin{remark}
Theorem \ref{mono} implies \cite[Theorem 2.1]{BFT1}, where the special case   $n=p^r$ and $m=1$ is previously considered. 
\end{remark}
\begin{corollary}\label{cor}\
\\Let $K=\Q(\th)$ be  a number field with $\th$ a root of a monic irreducible polynomial $F(x)=x^{2^r}+ax^m+b\in \Z[x]$.  Assume that  there exist a prime  $p$ such that $\n_p(a) \ge \n_p(b) \ge 3$, $\n_p(b)$ is odd, and $\Delta_p$ is square free. Then $F(x)$ is irreducible over $\Q$ and not monogenic. But,  $K$ is monogenic.
\end{corollary}
\begin{example}\
\\The polynomial $F(x)=x^8+8x+8 \in \Z[x]$ has discriminant $\Delta = 2^{24}  \times 1 273609$. Note that $1273609$ is a prime. Then $F(x)$ satisfies the conditions of Corollary \ref{cor} for $p=2$. Hence, it is irreducible over $\Q$. Let $K=\Q(\th)$ with $\th$ is a root of $F(x)$. Then $\Z_K \neq \Z[\th]$. But, $K$ is monogenic and $\a = \frac{\th^3}{2}$ generates a power integral basis of $\Z_K$.
\end{example}
%Applying Theorem \ref{mono} for $p =2$ or $3$ in some families of quartic number fields defined by trinomials, we have the following result. 
%\begin{corollary}
%Let $K=\Q(\th)$ be a quartic number field with $\th$ a root of $F(x)=x^4+a^m+b \in \Z[x],  1 \le m \le 3.$    Then Table \ref{Tablequartic} gives certain  conditions only on $a, b$ and $m$ for which $F(x)$ is not monogenic, but $K$ is monogenic. Furthermore, we compute  $\n_p((\Z_K : \Z[\th]))$ for the corresponding considered prime $p$, and we give explicitly the  generator $\a$  of a power integral basis of $K$.
%\end{corollary}

The following result gives infinite parametric  families number fields defined by $x^{2^r}+ax^m+b$ their indices are divisible by $2$ which garantie   the  non monogenity of these fields.  
\begin{theorem}\label{npib1}\
\\Let $F(x)=x^{2^r}+ax^m+b$ be a monic irreducible trinomial  and $K=\Q(\th)$ a number fields generated by $\th$, a  root of $F(x)$. If $r\ge 3$ and $a$ and $b+1$ are both divisible by $32$,
	then  $2$ is a prime common index divisor of $K$.  In particular,  $K$ is not monogenic.
\end{theorem}

Now, we focus on the case $m=1$. The following Theorem gives  sufficient  conditions which guarantee the non monogenity of infinite parametric families of number fields defined by trinomials of type $x^{2^r}+ax+b$.
\begin{theorem}\label{npib}\
\\Let $K=\Q(\th)$ be  a number field with $\th$ root of a monic irreducible trinomial $F(x)=x^{2^r}+ax+b\in \Z[x]$. If one of the following conditions holds 
\begin{enumerate}
\item $r \ge 3$, $a \equiv 4 \md 8$ and $b\equiv 3 \md 8$.
 \item  $r \ge 4$, $a \equiv 8 \md{16}$ and $b \equiv 7 \md{16}$.
 \item  $r\ge 3$ and  $(a,b) \in \{(0,31), (16,15)\} \md{32}$,

\end{enumerate}
	then  $2$ is a prime common index divisor of $K$.  In particular, if one of these conditions holds, then $K$ is not monogenic.
\end{theorem}
\begin{remark}
Theorem \ref{npib}(1)(3) implies respectively  \cite[Theorem 2.3(1)(2)]{BATCA1}  when the special case $r=3$ is previously studied.
\end{remark}
When $a=0$, the  following result is an immediate consequence of the above theorem.
\begin{corollary}
If $b \equiv 1 \md {32}$, then the pure number field $\Q(\sqrt[2^r]{b})$ is not monogenic for every natural integer $r \ge 3$. 
	
	\end{corollary}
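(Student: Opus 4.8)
The plan is to obtain this as the special case $a=0$ of Theorem~\ref{npib}. Setting $a=0$ in $F(x)=x^{2^r}+ax+b$ produces the binomial $F(x)=x^{2^r}+b$; a complex root $\th$ of $F$ then satisfies $\th^{2^r}=-b$, so $\th$ is a $2^r$-th root of $-b$ and $K=\Q(\th)=\Q(\sqrt[2^r]{-b})$ is exactly the pure number field in the statement. (Different $2^r$-th roots of $-b$ are conjugate, hence generate isomorphic fields, so monogenity does not depend on the choice.)

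It then remains to verify that the hypotheses of Theorem~\ref{npib} are met. First, one needs $F(x)=x^{2^r}+b$ to be irreducible over $\Q$ --- this is implicit in speaking of ``the pure number field $\Q(\sqrt[2^r]{-b})$'' of degree $2^r$, and by the classical irreducibility criterion for binomials it reduces, since the only prime dividing $2^r$ is $2$ and $4\mid 2^r$, to $-b$ being a non-square in $\Q$ together with $b\notin 4\Q^4$; the latter is automatic because $b$ is odd (a $2$-adic valuation count rules out $b=4(s/t)^4$). Second, the congruence $b\equiv -1\md{32}$ is the same as $b\equiv 31\md{32}$, so the pair $(a,b)=(0,b)$ satisfies $(a,b)\equiv(0,31)\md{32}$, and combined with $r\ge 4$ this is precisely one of the two alternatives in item~(3) of Theorem~\ref{npib}. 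Hence Theorem~\ref{npib}(3) applies verbatim and gives that $K$ is not monogenic.

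There is no real obstacle here: the whole content is the identification of a root of $x^{2^r}+b$ with $\sqrt[2^r]{-b}$ and the elementary observation $-1\equiv 31\md{32}$. The only point deserving a word of care is the standing irreducibility of $x^{2^r}+b$, which is required both so that $\Q(\sqrt[2^r]{-b})$ genuinely has degree $2^r$ and so that Theorem~\ref{npib} is applicable.
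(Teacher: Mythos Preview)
Your proposal is correct and matches the paper's approach exactly: the paper states the corollary as ``an immediate consequence'' of Theorem~\ref{npib} obtained by setting $a=0$, which is precisely what you do by recognizing $(a,b)\equiv(0,31)\md{32}$ in item~(3). Your extra remarks on the irreducibility of $x^{2^r}+b$ are not in the paper (which treats it as a standing hypothesis inherited from the section), but they are sound and do not constitute a different route.
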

%\begin{remark}
%Note  that the above corollary is in \cite[Theorem 2.6]{BF}.

%\end{remark}
\begin{remark}\
\\Note that \cite[Corollary 2.4]{BRM}  is about trinomials of  type $x^{2^r\cdot 3^k}+ax^m+b$. It gives sufficient conditions for the prime $3$ to be a common index divisor of $K$. Although
formally it  includes $ x^{2^r} + ax^m + b$, all statements of Corollary 2.4 of \cite{BRM} concerns
the cases $k \ge 1$. Hence do not overlap with  Theorems \ref{npib1} and  \ref{npib}. Here, we  gave sufficient conditions for which the prime $2$ is a common index divisor of $K$.
\end{remark}	
%	\begin{remark}\
%	\\It is important to note  that the fundamental method which allows to test whether a number field is monogenic or not is to solve the corresponding  index form equations which is very complicated for higher number field degrees  (see \cite{EG, G19}). In this work, we have   based our method  of Newton polygon techniques applied on  prime ideal factorization  which is an efficient tool to investigate the monogenity of number fields defined by trinomials.

%	\end{remark}
\section{Newton polygons and the index theorem of Ore}

To prove  our main results, we need some preliminaries that can be found in details in \cite{BATCA1,BRM}. For any $\eta \in \Z_K$, it is well known from  \cite[Proposition 2.13]{Na} that   
\begin{eqnarray}\label{indexdiscrininant}
D (\eta) = ( \Z_K : \Z [\eta])^2  \cdot  D_K,
\end{eqnarray}
where $D(\eta)$ is the discriminant of the minimal polynomial of $\eta$ and $D_K$ is the discriminant of $K$. Let $p$ be a prime. In 1878, Dedekind gave the explicit factorization of $p\Z_K$ when $p $ does not divide the index $ ( \Z_K : \Z [\eta])$ (see  \cite{R} and \cite[Theorem 4.33]{Na}). In 1928,   
\O. Ore \cite{O}  gave  a method for factoring $F(x)$ in $\Q_p(x)$, and so, for factoring  $p\Z_K$ when $F(x)$ is $p$-regular (see \cite{O}).   This  method is  based on Newton polygon techniques. His method was developed by   Gu\`{a}rdia, Montes and  Nart and  \cite{Narprime, Nar}, see also \cite{MN92} by   Montes and  Nart. So, let us  recall   some fundamental facts on  this  algorithm.

% For more details, we refer to  \cite{O} by Ore, \cite{MN92} by   Montes and  Nart and  \cite{Narprime, Nar} by   Gu\`{a}rdia,  Montes and  Nart. 
 Let  $\n_p$ be  the discrete valuation of $\Q_p(x)$  defined on $\Z_p[x]$ by $$\n_p\left(\sum_{i=0}^{m} a_i x^i\right) = \min \{ \n_p(a_i), \, 0 \le i \le m\}.$$ Let $\phi(x) \in \mathbb{Z}[x]$ be a monic polynomial whose reduction modulo $p$ is irreducible.  Upon the euclidean division by successive powers of $\ph(x),$ the  polynomial $F(x) \in \mathbb{Z}[x]$ admits a unique $\phi$-adic development $$ F(x )= a_0(x)+ a_1(x) \phi(x) + \cdots + a_n(x) {\phi(x)}^n,$$ with $ \deg \ ( a_i (x))  < \deg \ ( \phi(x))$. For every $0\le i \le n,$   let $ u_i = \n_p(a_i(x))$. The $\phi$-Newton polygon of $F(x)$ with respect to $\n_p$ (or to $p$, briefly) is  the lower convex hull of the points $ \{  ( i , u_i) \, | \,  0 \le i \le n \, , a_i(x) \neq 0  \}$ in the euclidean plane, which we denote   by $N_{\phi} (F)$. The polygon  $N_{\phi} (F)$ is the union of different adjacent sides $ S_1, S_2, \ldots , S_g$ with increasing slopes $ \lambda_1, \lambda_2, \ldots,\lambda_g$. We shall write $N_\phi(F) = S_1+S_2+\cdots+S_g$. The polygon determined by the sides of negative slopes of $N_{\phi}(F)$ is called the  $\phi$-principal Newton polygon of $F(x)$  with respect to $\n_p$ and will be denoted by $\npp{F}$. Note that the length of $\npp{F}$ is $ l(\npp{F}) = \nu_{\overline{\ph}}(\overline{F(x)})$;  the highest power of $\phi(x)$ dividing $F(x)$ modulo $p$.\\
Let $S$ be a side of $\npp{F}$. Then the length of $S$, denoted $l(S)$ is the length of its  projection to the horizontal axis and its height, denoted $h(S)$ is the length of its  projection to the vertical axis.  Let   $\lambda = - \frac{h(S)}{l(S)}= - \frac{h}{e} $  its slope, where $e$ and $h$ are two positive coprime integers.  The degree of $S$ is $d(S) = \gcd(h(S), l(S))=  \frac{l(S)}{e}$; it is equal to the the number of segments into which the integral lattice divides $S$. More precisely, if $ (s , u_s)$ is the initial point of $S$, then the points with integer coordinates  lying in $S$ are exactly $$  (s , u_s) ,\ (s+e , u_s - h) , \ldots, (s+de , u_s - dh).$$  The natural integer $e= \frac{l(S)}{d(S)}$ is called the ramification index of the side $S$ and denoted by $e(S)$.\\
Let $\mathbb{F}_{\phi}$ be the finite field   $ \mathbb{Z}[x]\textfractionsolidus(p,\phi (x)) \simeq \mathbb{F}_p[x]\textfractionsolidus (\overline{\ph(x)})$ (note that the ideal $(p,\phi(x))$ is maximal in the ring of  polynomials  $\Z[x]$).
For   any abscissa $ s \leq i \leq  s+de,$ we define  the following residue coefficient $ c_i \in  \mathbb{F}_{\phi}$:
$$c_{i}=
\left
\{\begin{array}{ll} 0,& \mbox{if }  (i , u_i ) \, \text{ lies  strictly  above }  \ \npp{F},\\
\dfrac{a_i(x)}{p^{u_i}}
\,\,
\md{(p,\phi(x))},&\mbox{if } \  (i , u_i ) \, \text{lies on } \npp{F}. 
\end{array}
\right.$$ Further, we attach to $S$ the  residual polynomial:  $$ R_{\l}(F)(y) = c_s + c_{s+e}y+ \cdots + c_{s+(d-1)e}y^{d- 1}+ c_{s+de}y^d \in \mathbb{F}_{\phi}[y].$$ Now, we recall some related definitions to Ore's program.
\begin{definitions}
	Let $F(x) \in \Z[x]$ be a monic irreducible polynomial.  Let $\overline{F(x)}=\prod_{i=1}^t\overline{\ph_i}(x)^{l_i}$ be  the factorization  of $\overline{F(x)}$ into a product  of powers of distinct monic irreducible polynomials in  $\mathbb{F}_p [x]$. For every $i=1,\dots,t$, let  $N_{\ph_i}^+(F)=S_{i1}+\dots+S_{ir_i}$, and for every {$j=1,\dots, r_i$},  let $R_{\l_{ij}}(F)(y)=\prod_{s=1}^{s_{ij}}\psi_{ijs}^{n_{ijs}}(y)$ be the factorization of $R_{\l_{ij}}(F)(y)$ in $\F_{\ph_i}[y]$. 
	\begin{enumerate}
		\item  For every $i=1,\dots,t$, the $\ph_i$-index of $F(x)$, denoted by $ind_{\ph_i}(F)$, is  deg$(\ph_i)$ multiplied by  the number of points with natural integer coordinates that lie below or on the polygon $N_{\ph_i}^{+}(F)$, strictly above the horizontal axis  and strictly beyond the vertical axis.
		\item The polynomial $F(x)$ is said to be $\phi_i$-regular with respect to $\n_p$  if for every $j=1,\dots, r_i$,  $R_{\l_{ij}}(F)(y)$ is separable; $n_{ijs}=1$.
		\item The polynomial $F(x)$ is said to be $p$-regular if it is $\phi_i$-regular for every $ 1 \leq i \leq t$.
	\end{enumerate}
\end{definitions} 
Now, we state Ore's theorem which will be often used in the proof of our results (see  \cite[Theorem 1.19]{Nar}, \cite{MN92} and \cite{O}):
\begin{theorem}\label{ore} (Ore's Theorem)\
	\\Let $K$ be a number field generated by  $\th$, a root of a monic irreducible polynomial $F(x) \in \Z[x]$.  Under the above notations, we have: 
	\begin{enumerate}
		\item
		$$ \nu_p((\Z_K:\Z[\th]))\ge \sum_{i=1}^t ind_{\ph_i}(F).$$ Moreover, the  equality holds if $F(x)$ is $p$-regular
		\item
		If  $F(x)$ is $p$-regular, then 
		$$p\Z_K=\prod_{i=1}^t\prod_{j=1}^{r_i}
		\prod_{s=1}^{s_{ij}}\pF^{e_{ij}}_{ijs},$$ where $e_{ij}$ is the ramification index
		of the side $S_{ij}$ and $f_{ijs}=\mbox{deg}(\ph_i)\times \mbox{deg}(\psi_{ijs})$ is the residue degree of $\mathfrak{p}_{ijs}$ over $p$.
	\end{enumerate}
\end{theorem}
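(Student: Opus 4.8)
The plan is to pass to the $p$-adic completion and show that the combinatorial data attached to the polygons $\npp{F}$ --- the sides, their slopes, and the factorizations of the residual polynomials --- recover the factorization of $F(x)$ over $\Q_p$, from which both statements follow. Recall that if $F(x)=\prod_\rho F_\rho(x)$ is the factorization of $F$ into monic irreducible polynomials over $\Q_p$, then the primes of $\Z_K$ above $p$ are in bijection with the factors $F_\rho$, the prime $\pF_\rho$ attached to $F_\rho$ having ramification index $e_\rho$ and residue degree $f_\rho$ equal to those of the local field $\Q_p[x]/(F_\rho)$, and $\nu_p((\Z_K:\Z[\th]))$ is likewise computed locally from the $F_\rho$. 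Thus everything reduces to reading off the $F_\rho$, together with $e_\rho$ and $f_\rho$, from the polygon data.

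First I would separate the distinct residual factors $\ph_i$. Since $\overline{F}=\prod_{i=1}^t\overline{\ph_i}^{l_i}$ is a factorization into coprime powers, Hensel's lemma yields a factorization $F=\prod_{i=1}^t F_{(i)}$ over $\Z_p$ with $\overline{F_{(i)}}=\overline{\ph_i}^{l_i}$ and $N_{\ph_i}^+(F)=N_{\ph_i}^+(F_{(i)})$. Hence it suffices to treat a single $\ph=\ph_i$, i.e. the factor $F_{(i)}$ all of whose $p$-adic roots $\th_\rho$ satisfy $\overline{\ph}(\overline{\th_\rho})=0$. The central tool is the \emph{theorem of the product}, the additivity $N_\ph(GH)=N_\ph(G)+N_\ph(H)$ under Minkowski sum, which follows from the valuation identity for $\ph$-adic developments. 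Combining it with the \emph{theorem of the polygon}, each side $S_{ij}$ of $\npp{F_{(i)}}$, of slope $-h_{ij}/e_{ij}$ in lowest terms, corresponds to a factor of $F_{(i)}$ over $\Q_p$ all of whose roots $\th_\rho$ have $\nu_p(\ph(\th_\rho))=h_{ij}/e_{ij}$, and the ramification index of the associated primes is divisible by $e_{ij}$.

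Next I would resolve each side via its residual polynomial. For the side $S_{ij}$ the \emph{theorem of the residual polynomial} asserts that the factorization $R_{\l_{ij}}(F)(y)=\prod_s \psi_{ijs}^{n_{ijs}}(y)$ in $\F_{\ph_i}[y]$ refines the factorization of the $S_{ij}$-part of $F_{(i)}$: each $\psi_{ijs}$ of multiplicity $n_{ijs}=1$ yields exactly one irreducible $p$-adic factor, with ramification index $e_{ij}$ and residue degree $\deg(\ph_i)\deg(\psi_{ijs})$. When $F(x)$ is $p$-regular every $R_{\l_{ij}}(F)(y)$ is separable, so all $n_{ijs}=1$ and the factorization of $F$ over $\Q_p$ is completely determined. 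Collecting one prime $\pF_{ijs}$ for each triple $(i,j,s)$, with the stated $e_{ij}$ and $f_{ijs}=\deg(\ph_i)\deg(\psi_{ijs})$, gives statement~(2); the identity $\sum_{i,j,s} e_{ij}f_{ijs}=n$ is exactly $\deg F=\sum_\rho \deg F_\rho$, serving as a consistency check.

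Finally, for statement~(1) I would relate the lattice-point count to the index. The number of integer points below or on $\npp{F}$, strictly above the horizontal axis and strictly beyond the vertical axis, computes side by side the area of the associated regions, and summing over all $\ph_i$ gives $\sum_i ind_{\ph_i}(F)$. Comparing this with $\nu_p(\Delta(F))=2\,\nu_p((\Z_K:\Z[\th]))+\nu_p(D_K)$, which is \eqref{indexdiscrininant} applied to $\th$, and expressing $\nu_p(\Delta(F))$ through the different of each local factor, one obtains $\nu_p((\Z_K:\Z[\th]))\ge \sum_i ind_{\ph_i}(F)$, the defect being a sum of nonnegative contributions hidden inside the non-separable residual factors; these vanish precisely when every $R_{\l_{ij}}(F)(y)$ is separable, i.e. when $F$ is $p$-regular, yielding the equality. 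The main obstacle is the pair of structural inputs --- the theorem of the polygon and the theorem of the residual polynomial --- which together justify that the first-order polygon data genuinely detect the $p$-adic factorization; their proofs rest on $\ph$-adic valuation estimates and Hensel-type lifting, and in the non-regular case would require iterating with Newton polygons of higher order, which is not needed for the equality case treated here.
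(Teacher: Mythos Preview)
The paper does not actually prove this theorem; it is stated as a known preliminary result, with the remark that it follows from ``the corresponding statements of the product, the polygon, and the residual polynomial'' and with citations to \cite{EMN}, \cite{MN92}, \cite{O}, and \cite{Nar} for the full arguments. Your sketch follows exactly this route---Hensel-separating the $\ph_i$-factors, then invoking the theorems of the product, the polygon, and the residual polynomial to read off the $p$-adic factorization and the index---so it is fully in line with what the paper invokes, though the paper itself defers all details to the cited literature rather than supplying a proof of its own.
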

\section{proofs of the main results}
	\begin{proof} [Proof of Theorem \ref{mono}]\
		\\Reducing modulo $p$, one has  $F(x) \equiv \ph^{n} \md p$, where $\ph(x) = x$, because $p$ divides both $a$ and $b$. Further, we have $\npp{F}=S$  has  only one  side of degree $1$ (because $\gcd(n,\n_p(b))=1$) with slope $\l= -\frac{\n_p(b)}{n}$. Its attached  residual polynomial  $R_{\l}(F)(y)$ is irreducible over $\F_{\ph} \simeq \F_p $ as it is of degree $1$. By Theorem of the  residual polynomial (\cite[Theorem 1.19]{Nar}), $F(x)$ is irreducible over $\Q_p$.  Hence, it is irreducible over $\Q$. On the other hand, by using Theorem \ref{ore}, we see that $$\n_p(\Z_K:\Z[\th]) \ge ind_{\ph}(F) = \dfrac{(n-1)(\n_p(b)-1)}{2} \ge 1.$$ Thus, $p$ divides $(\Z_K:\Z[\th]) $.  So, the polynomial $F(x)$ is not monogenic.\\ 	Let $L=\Q_p(\th)$ and $\omega$  the   unique  valuation  of $L$ extending $\nu_p$ (note that $\Q_p$ is a Henselian field).	Let  $(s,t)\in \Z^2$  be the unique solution of the Diophantine equation $\n_p(x)s-nt=1$ with $0\le s <n$ and $\a=\frac{\th^s}{p^t}$.  Note  that  $\a\in \Z_K$ if and only if $\omega(\a)\ge 0$. Since $\npp{F}=S$ has  a single side of slope $ \lambda = - \frac{\n_p(b)}{n}$, we conclude that $\omega(\th)=\frac{\n_p(b)}{n}$. So, $$\omega(\a)=\omega\big(\frac{\th^s}{p^t}\big)=s \omega(\th)-t = \frac{s \n_p(b)-tn}{n} = \frac{1}{n}.$$ Since $s$ and $n$ are coprime,  $K=\Q(\a)$. Let $H(x)$ be the minimal polynomial of $\a$ over $\Q$. By the formula relating roots and coefficients of a monic polynomial, we conclude that $$H(x)=x^n+\sum_{i=1}^{n}(-1)^is_ix^{n-i},$$ where $s_i=\displaystyle\sum_{k_1<\dots<k_i}\a_{k_1}\cdots\a_{k_i}$, and  $\a_{1},\dots, \a_{n}$ are the $\overline{\Q_p}$-conjugates of $\a$. Since there is a unique valuation extending $\nu_p$ to any algebraic extension of $\Q_p$, we conclude that $\omega(\a_i)=1/n$ for every $i=1,\dots,n$ (recall that the valuation $\omega$ is invariant under the $K$-embedding actions). Thus,   $\nu_p(s_{n}) = \omega(\a_{1}\cdots\a_{n})=1$ and  $\nu_p(s_{i})\ge i/n$ for every $i=1,\dots, n-1$. That means that $H(x)$ is a $p$-Eisenstein polynomial. Hence, $p$ does not divide the index $(\Z_K:\Z[\a])$. On the other hand,  every prime $q \neq p$ does not divide $(\Z_K:\Z[\th])$, because $\Delta_p$ is square free (see \cite[Proposition 2.13]{Na}). By definition of $\a$,   $p$ is the unique positive prime integer candidate to divide  $(\Z[\a] : \Z[\th])$. Consequently,   $\Z_K=\Z[\a]$.
	
	\end{proof}

	To prove Theorems \ref{npib1} and  \ref{npib}, we will use  the following lemma which gives a sufficient  condition for a prime  $p$ to be a prime common index divisor of a given  field $K$ (see \cite{R} and \cite[Theorems 4.33 and 4.34 ]{Na}).
\begin{lemma} \label{comindex}\
	\\Let  $p$ be a prime  and $K$  a number field. For every positive integer $f$, let $L_p(f)$ be the number of distinct prime ideals of $\Z_K$ lying above $p$ with residue degree $f$ and $N_p(f)$ be the number of monic irreducible polynomials of  $\F_p[x]$ of degree $f$. If  $L_p(f) > N_p(f)$ for some positive integer $f$, then $p$ is a common index divisor of $K$.
\end{lemma}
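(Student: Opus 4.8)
The plan is to prove the contrapositive: assuming that $p$ is \emph{not} a prime common index divisor of $K$, I will show that $P_d \le N_p(d)$ for every positive integer $d$. So suppose $p \nmid i(K)$. By the very definition of $i(K)$ as the gcd of all indices $(\Z_K : \Z[\eta])$ with $K = \Q(\eta)$, the assumption $p \nmid i(K)$ forces the existence of at least one $\eta \in \Z_K$ with $K = \Q(\eta)$ and $p \nmid (\Z_K : \Z[\eta])$ (otherwise $p$ would divide every such index, hence their gcd). Let $g(x) \in \Z[x]$ be the minimal polynomial of this $\eta$ over $\Q$.

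The key tool is Dedekind's theorem on the factorization of $p$ (see \cite{R} and \cite[Theorem 4.33]{Na}): since $p$ does not divide $(\Z_K : \Z[\eta])$, the prime ideal factorization of $p\Z_K$ is read off directly from the factorization of $\overline{g}(x)$ in $\F_p[x]$. Writing $\overline{g} = \prod_{i=1}^{s} \overline{g_i}^{\,e_i}$ with the $\overline{g_i}$ pairwise distinct monic irreducible polynomials in $\F_p[x]$ of degrees $f_i = \deg(\overline{g_i})$, Dedekind's theorem gives $p\Z_K = \prod_{i=1}^{s} \pF_i^{\,e_i}$ where $\pF_i$ has residue degree $f_i$ over $p$. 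Consequently, for a fixed $d$, the prime ideals of $\Z_K$ above $p$ with residue degree $d$ are in bijection with the distinct monic irreducible factors of $\overline{g}$ of degree $d$; in particular $P_d$ equals the number of such factors.

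Finally, those distinct monic irreducible factors of $\overline{g}$ of degree $d$ form a subset of the set of \emph{all} monic irreducible polynomials of degree $d$ over $\F_p$, whose cardinality is by definition $N_p(d)$. Hence $P_d \le N_p(d)$ for every $d$, which is exactly the contrapositive of the claim. I do not expect any genuine obstacle here: the only substantive input is Dedekind's theorem, and the remainder is a pigeonhole count. The single point that merits care is logical rather than computational — one must note that the desired inequality $P_d \le N_p(d)$ involves only invariants of $K$ and $p$, so it is enough to produce \emph{one} generator $\eta$ with $p \nmid (\Z_K : \Z[\eta])$, and this is precisely what $p \nmid i(K)$ supplies.
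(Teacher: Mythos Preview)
Your argument is correct: the contrapositive via Dedekind's factorization theorem is exactly the standard route, and your pigeonhole observation that the distinct degree-$d$ irreducible factors of $\overline{g}$ inject into the set of all monic degree-$d$ irreducibles over $\F_p$ is the only thing needed beyond it. The paper does not supply its own proof of this lemma but simply cites \cite{R} and \cite[Theorems~4.33 and~4.34]{Na}; your proof is precisely the argument underlying those references, so the approaches coincide.
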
 
\begin{remark}\
	\\Note that the condition $i(K)=1$ is not sufficient for the monogenity  of $K$. The index of the  pure cubic number field  $K=\Q(\sqrt[3]{175})$ equals $1$, but $K$ is not monogenic as its index form equation is $5x^3-7y^3 = \pm 1$ and has no integral solutions.
\end{remark}
\begin{proof}[Proof of Theorem \ref{npib1}]
	By hypothesis, $2$ divides $a$ and does not divide $b$. Thus, $\ol{F(x)} = \ol{\ph_1(x)}^{2^r}$ in $\F_2[x]$, where $\ph_1(x)=x-1$.  Write \begin{eqnarray}\label{dev}
	F(x)&=& (x-1+1)^{2^r}+ax^m+b \nonumber \\
	&=& \sum_{j=1}^{2^r} \dbinom{2^r}{j}\ph_1(x)^j+ax^m+1+b. 
	\end{eqnarray}
Since $a$ and $b+1$ are both divisible by $32$,  $\n_2(a)$ and $\n_2(1+b) \ge 5$. So,  $$\n_2\left(ax^m+1+b)\right)\ge 5.$$
Let	$ax^m+1+b=\sum_{j=1}^{m}b_j \ph_1(x)^j$  be the $\ph$-adic development of $ax^m+1+b$ where $b_j  \in \Z$. Note that $\n_2(b_j) \ge 5$ for all $j=1,\ldots,m$. It follows that $F(x)= \sum_{j=0}^{2^r}a_j \ph_1(x)^j,$ where $a_0=b_0$ and
$$a_{j}=
\left
\{\begin{array}{ll} b_j+\dbinom{2^r}{j}
,& \mbox{if } 1 \le j \le m,\\
\dbinom{2^r}{j},&\mbox{if } \  m<j \le 2^r. 
\end{array}
\right.$$
 Recall  that $\n_2\left(\dbinom{2^r}{j}\right)=r-\n_2(j))$ for every $j=1 , \ldots, 2^r-1$ (see  \cite[Lemma 3.4]{BRM}). It follows that $\mu_0=\n_2(a_0)\ge 5$ and
 $$
 \left
 \{\begin{array}{ll} \mu_j=\n_2\left(a_j\right) \ge \min\{5, r-\n_2(
 	j)\},& \mbox{if } 1 \le j \le m,\\
\mu_j=\n_2 \left( a_j\right)=r-\n_2(j),&\mbox{if } \  m<j \le 2^r. 
 \end{array}
 \right.$$
Since $r \ge 3$,  $N_{\ph_1}^{+}=S_{1,1}+\cdots+S_{1,t-2}+S_{1, t-1}+S_{1, t}$ has $t$ sides with $t\ge 4$ joining the points $\{(j, \mu_j),\,\, j=0,\ldots,2^r\}$ in the euclidean plane. The last three sides have degree $1$ each and ramification index  $2^{^{r-1}}$ each. More precisely, the part $S_{1, t-2}+S_{1, t-1}+S_{1, t}$ is the polygon joining the points $(2^{r-3},3), \,\, (2^{r-2},2), (2^{r-1},1) \,\, \mbox{and} \,\, (2^r,0)$ (see FIGURE 1).Thus the  residual polynomials $R_{1\,\, t-k}(F)(y)$  are irreducible in $\F_{\ph_2}[y]$ as they are of degree $1$ each for $k=0,1,2$.   Applying Theorem \ref{ore}, one has: $$2\Z_{K}=  \pF_{1, t-2, 1}^{2^{r-1}} \cdot \pF_{1, t-1, 1}^{2^{r-1}} \cdot \pF_{1, t, 1}^{2^{r-1}}\aF, $$ where $\pF_{1, t-2, 1},  \pF_{1, t-1, 1}$ and $\pF_{1, t, 1}$  are three prime ideals of $\Z_K$ of residue degree $1$ each, and    $\aF$ is a non-zero ideal of $\Z_{K}$ provided by the other segments of $N_{\ph_1}^{+}(F)$. So, the  monic irreducible factor $\ph_1(x)$ of $F(x)$ modulo $2$ provides at least three prime ideals of residue degree $1$ each lying above $2$ in $\Z_{K}$. So,   $L_2(1) \ge 3 > 2= N_2(1)$. By Lemma \ref{comindex}, $2$ divides $i(K)$. Hence, $K$ is not monogenic.

\begin{figure}[htbp] 
	\centering
	\begin{tikzpicture}[x=0.75cm,y=0.5cm]
	\draw[latex-latex] (0,7) -- (0,0) -- (17,0) ;

	\draw[thick] (0,0) -- (-0.5,0);
	\draw[thick] (0,0) -- (0,-0.5);
	
	%\draw[thick,red] (1,-2pt) -- (1,2pt);
	\draw[thick,red] (2,-2pt) -- (2,2pt);
	%\draw[thick,red] (3,-2pt) -- (3,2pt);
	\draw[thick,red] (4,-2pt) -- (4,2pt);
	%\draw[thick,red] (5,-2pt) -- (5,2pt);
	%\draw[thick,red] (6,-2pt) -- (6,2pt);
	%\draw[thick,red] (7,-2pt) -- (7,2pt);
	\draw[thick,red] (8,-2pt) -- (8,2pt);
	%	\draw[thick,red] (9,-2pt) -- (9,2pt);
	\draw[thick,red] (-2pt,1) -- (2pt,1);
	\draw[thick,red] (-2pt,2) -- (2pt,2);
	\draw[thick,red] (-2pt,3) -- (2pt,3);
	\draw[thick,red] (-2pt,4) -- (2pt,4);	
	\draw[thick,red] (-2pt,5) -- (2pt,5);	
	\node at (2,0) [below ,blue]{\footnotesize  $2^{r-1}$};
	\node at (4,0) [below ,blue]{\footnotesize $2^{r-2}$};
	%\node at (3,0) [below ,blue]{\footnotesize  $3$};
	\node at (8,0) [below ,blue]{\footnotesize  $2^{r-1}$};
	%\node at (5,0) [below ,blue]{\footnotesize  $5$};
	%\node at (6,0) [below ,blue]{\footnotesize  $6$};
	%\node at (7,0) [below ,blue]{\footnotesize $7$};
	\node at (16,0) [below ,blue]{\footnotesize  $2^r$};
	%	\node at (9,0) [below ,blue]{\footnotesize  $9$};
	\node at (0,1) [left ,blue]{\footnotesize  $1$};
	\node at (0,2) [left ,blue]{\footnotesize  $2$};
	\node at (0,3) [left ,blue]{\footnotesize  $3$};
	\node at (0,4) [left ,blue]{\footnotesize  $4$};
	\node at (0,5) [left ,blue]{\footnotesize  $\mu_5$};
	\draw[thick, mark = *] plot coordinates{(2,3) (4,2) (8,1) (16,0)};
	%	\draw[thick, only marks, mark=x] plot coordinates{(1,1) (1,2)   (2,1)     (3,1)     };
	\draw[thick, dashed] plot coordinates{(1,4) (2,3) };
	\draw[thick, only marks, mark=*] plot coordinates{  };
	\node at (3,2.5) [above  ,blue]{\footnotesize $S_{1\,\,t-2}$};	
	\node at (6,1.2) [above  ,blue]{\footnotesize $S_{1\,\,t-1}$};
	\node at (13,0.3) [above  ,blue]{\footnotesize $S_{1t}$};
	%	\node at (2.5,1.4) [above   ,blue]{\footnotesize $S_{2}$};
	%	\node at (5.5,0.4) [above   ,blue]{\footnotesize $S_{3}$};
	\end{tikzpicture}
	\caption{\small   $N_{\ph_1}^{+}(F)$ where $r \ge 3$ and $a \equiv 0 \md{32}$ and $1+b\equiv 0 \md{32}$ }
\end{figure}

\end{proof}

	\begin{proof} [Proof of Theorem \ref{npib}]\
	\\In all cases, we prove that $K$ is not monogenic by showing that that $2$ divides $i(K)$. Since in all cases, $2$ divides $a$ and  does not divide $b$, $\ol{F(x)}= \ol{(x-1)}^{2^r}$ in $\F_2[x]$. Write \begin{eqnarray}\label{dev}
		F(x)&=& (x-1+1)^{2^r}+a(x-1+1)+b \nonumber \\
		&=& (x-1)^{2^r}+ \sum_{j=2}^{2^r-1} \binom{2^r}{j}(x-1)^j+(2^r+a)(x-1)+1+a+b. 
	\end{eqnarray}
	Let $\ph_1(x) =x-1$, $\mu=\n_2(2^r+a)$ and $\n =  \n_2(1+a+b)$. It follows that by the above $\ph$-development (\ref{dev}) of $F(x)$ that the principal Newton polygon $\npp{F}$ with respect to $\n_2$ is the Newton polygon joining the points $\{(0, \n), (1, \mu)\} \cup \{(2^j, r-\n_2(j)), j=1, \ldots, r\}$. 
	\begin{enumerate}
	\item If $r \ge 3$, $a \equiv 4 \md 8$ and $b\equiv 3 \md 8$, then $\mu =2$ and $\n \ge 3$. It follows that $N_{\ph_1}^{+}{(F)}=S_{11}+S_{12}+S_{13}$ has three sides of degree $1$ each joining the points   $(0, \nu), (1,2), (2^{r-1},1) \,\, \mbox{and} \,\, (2^r,0)$ in the euclidean plane (see FIGURE 2 for $r=5$). Thus,  $R_{\l_{1k}}(F)(y)=1+y$ is irreducible in $\F_{\ph_1}[y] \simeq \F_2[y]$  as it is of degree $1$ for $i=1,2,3$. So, $F(x)$ is $2$-regular. Applying Theorem \ref{ore}, one gets:  $$2 \Z_K= \pF_{111} \cdot  \pF_{121}^{2^{r-1}-1} \cdot \pF_{131}^{2^{r-1}},$$ where $p_{1k1}$ is a  prime ideal of $\Z_K$ of residue degree $f(\pF_{1k1}/2)=1$ for $k=1,2,3$. So, there are three prime ideals of $\Z_K$ of residue degree $1$ each lying above $2$. Applying Lemma \ref{comindex} for $p=2$ and $f=1$, we see that $2$ divides $i(K)$. Consequently, $K$ cannot be monogenic.
		\begin{figure}[htbp] 
		\centering	
		\begin{tikzpicture}[x=0.4cm,y=0.5cm]
		\draw[latex-latex] (0,5) -- (0,0) -- (33,0) ;

		\draw[thick] (0,0) -- (-0.5,0);
		\draw[thick] (0,0) -- (0,-0.5);
		
		\draw[thick,red] (1,-2pt) -- (1,2pt);
		\draw[thick,red] (2,-2pt) -- (2,2pt);
		%\draw[thick,red] (3,-2pt) -- (3,2pt);
		%\draw[thick,red] (4,-2pt) -- (4,2pt);
		%\draw[thick,red] (5,-2pt) -- (5,2pt);
		%\draw[thick,red] (6,-2pt) -- (6,2pt);
		%\draw[thick,red] (7,-2pt) -- (7,2pt);
		\draw[thick,red] (16,-2pt) -- (16,2pt);
		\draw[thick,red] (32,-2pt) -- (32,2pt);
		\draw[thick,red] (-2pt,1) -- (2pt,1);
		\draw[thick,red] (-2pt,2) -- (2pt,2);
		\draw[thick,red] (-2pt,3) -- (2pt,3);
		\draw[thick,red] (-2pt,4) -- (2pt,4);	
		%\draw[thick,red] (-2pt,5) -- (2pt,5);	
		\node at (1,0) [below ,blue]{\footnotesize  $1$};
		\node at (2,0) [below ,blue]{\footnotesize $2$};
		\node at (16,0) [below ,blue]{\footnotesize  $2^4$};
		\node at (32,0) [below ,blue]{\footnotesize  $2^5$};
	%	\node at (5,0) [below ,blue]{\footnotesize  $5$};
	%	\node at (6,0) [below ,blue]{\footnotesize  $6$};
	%	\node at (7,0) [below ,blue]{\footnotesize $7$};
	%	\node at (8,0) [below ,blue]{\footnotesize  $8$};
		\node at (0,1) [left ,blue]{\footnotesize  $1$};
		\node at (0,2) [left ,blue]{\footnotesize  $2$};
		\node at (0,3) [left ,blue]{\footnotesize  $3$};
		\node at (0,4) [left ,blue]{\footnotesize  $\nu$};
		%\node at (0,5) [left ,blue]{\footnotesize  $5$};
		\draw[thick, mark = *] plot coordinates{(0,4) (1,2) (16,1) (32,0)};
		%\draw[thick, only marks, mark=x] plot coordinates{(1,1) (1,2)  (2,1)(2,2)     (3,1)    (4,1)  };
		%\draw[thick, only marks, mark=*] plot coordinates{ (2,2)};	
		\node at (0.5,3) [above  ,blue]{\footnotesize $S_{1}$};
		\node at (10,1.2) [above   ,blue]{\footnotesize $S_{2}$};
		\node at (20,0.4) [above   ,blue]{\footnotesize $S_{3}$};
		\end{tikzpicture}
		\caption{ \small  $N_{\ph_1}^{+}{(F)}$ with respect to $\n_2$ when $r=5$, $a\equiv 4 \md 8$ and $b \equiv 3 \md 8$.}
	\end{figure}
\item If $r \ge 4$, $a \equiv 8 \md{16}$ and $b \equiv 7 \md{16}$, then $\mu =3 $ and $\n \ge 4$. It follows that $N_{\ph_1}^{+}{(F)}=S_{11}+S_{12}+S_{13}+S_{14}$ has $4$ sides of degree $1$ each joining the points $(0, \nu), (1,3), (2^{r-2},2), (2^{r-1}, 1)\,\, \mbox{and} \,\,(2^r,0)$ in the euclidean plane with respective slopes $\l_{11} \le -1, \l_{12}=\frac{-1}{2^{r-1}-1}, \l_{13} = \frac{-1}{2^{r-2}} \,\, \mbox{and} \,\, \l_{14} =\frac{-1}{2^{r-1}}$ (see FIGURE 3 for $r=6$).  Thus, $R_{\l_{1k}}(F)(y)$ are irreducible over $\F_{\ph_1} \simeq \F_2$ as they are of degree $1$. It follows that $F(x)$ is $2$-regular. By using Theorem \ref{ore}, we see that $$2\Z_K= \pF_{111} \cdot  \pF_{121}^{2^{r-2}-1} \cdot  \pF_{131}^{2^{r-2}} \cdot  \pF_{141}^{2^{r-1}},$$ where   $p_{1k1}$ are prime ideals of $\Z_K$  of residue degree $f(\pF_{1k1}/2)=1$ for $k=1,2,3,4$. Thus, for $p=2$, we have $4 = L_2(1) >  N_2(1)=2$. By using Lemma \ref{comindex}, $2$ divides $i(K)$. Hence, $K$ cannot be monogenic.

\begin{figure}[htbp] 
	\centering	
	\begin{tikzpicture}[x=0.2cm,y=0.5cm]
	\draw[latex-latex] (0,6) -- (0,0) -- (66,0) ;

	\draw[thick] (0,0) -- (-0.5,0);
	\draw[thick] (0,0) -- (0,-0.5);
	
	\draw[thick,red] (1,-2pt) -- (1,2pt);
	\draw[thick,red] (2,-2pt) -- (2,2pt);
	%\draw[thick,red] (3,-2pt) -- (3,2pt);
	%\draw[thick,red] (4,-2pt) -- (4,2pt);
	%\draw[thick,red] (5,-2pt) -- (5,2pt);
	%\draw[thick,red] (6,-2pt) -- (6,2pt);
	%\draw[thick,red] (7,-2pt) -- (7,2pt);
	\draw[thick,red] (16,-2pt) -- (16,2pt);
	\draw[thick,red] (32,-2pt) -- (32,2pt);
	\draw[thick,red] (-2pt,1) -- (2pt,1);
	\draw[thick,red] (-2pt,2) -- (2pt,2);
	\draw[thick,red] (-2pt,3) -- (2pt,3);
	\draw[thick,red] (-2pt,4) -- (2pt,4);	
	%\draw[thick,red] (-2pt,5) -- (2pt,5);	
	\node at (1,0) [below ,blue]{\footnotesize  $1$};
	\node at (2,0) [below ,blue]{\footnotesize $2$};
	\node at (16,0) [below ,blue]{\footnotesize  $2^4$};
	\node at (32,0) [below ,blue]{\footnotesize  $2^5$};
		\node at (64,0) [below ,blue]{\footnotesize  $2^6$};
	%	\node at (5,0) [below ,blue]{\footnotesize  $5$};
	%	\node at (6,0) [below ,blue]{\footnotesize  $6$};
	%	\node at (7,0) [below ,blue]{\footnotesize $7$};
	%	\node at (8,0) [below ,blue]{\footnotesize  $8$};
	\node at (0,1) [left ,blue]{\footnotesize  $1$};
	\node at (0,2) [left ,blue]{\footnotesize  $2$};
	\node at (0,3) [left ,blue]{\footnotesize  $3$};
	\node at (0,5) [left ,blue]{\footnotesize  $\nu$};
	%\node at (0,5) [left ,blue]{\footnotesize  $5$};
	\draw[thick, mark = *] plot coordinates{(0,5) (1,3) (16,2) (32,1) (64,0)};
	%\draw[thick, only marks, mark=x] plot coordinates{(1,1) (1,2)  (2,1)(2,2)     (3,1)    (4,1)  };
	%\draw[thick, only marks, mark=*] plot coordinates{ (2,2)};	
	\node at (0.9,4) [above  ,blue]{\footnotesize $S_{1}$};
	\node at (10,2.2) [above   ,blue]{\footnotesize $S_{2}$};
	\node at (20,1.4) [above   ,blue]{\footnotesize $S_{3}$};
		\node at (40,0.4) [above   ,blue]{\footnotesize $S_{3}$};
	\end{tikzpicture}
	\caption{\small   $N_{\ph_1}^{+}{(F)}$ with respect to $\n_2$  where  $r=6$,  $a \equiv 8 \md{16}$ and $b \equiv 7 \md{16}$.}
\end{figure} 
\item The case $r=3$ is previously  studied in \cite[Theorem 2.3(2)]{BATCA1}; we have shown that $2$ divides $i(K)$. Also, by Theorem \ref{npib1},  we have  $r \ge 3, \, a \equiv 0 \md{32}\, \mbox{and}\,\, b\equiv 31 \md{32}$, then $K$ is not monogenic.   Assume now that   $r\ge 4, a \equiv 16 \md{32}\, \mbox{and}\,\, b \equiv 15 \md{32}$, then $\mu \ge 4$ and $\n \ge 5$. Thus,  $$N_{\ph_1}^{+}{(F)}= S_{11}+ \ldots+ S_{1 \,\, t-2}+S_{1 \,\, t-1}+S_{1t}$$ has $t $ sides with $t \ge 4$. The last three sides  have degree $1$ each. More precisely, for $k=0,1,2$, $S_{1, \,\, t-k}$ is the side joining the points   $(2^{r-k-1}, k+1)$ and $(2^{r-k}, k)$. Their respective slopes are:  $\l_{1\,\, t-2}= \dfrac{-1}{2^{r-3}}, \l_{1\,\, t-1}=\dfrac{-1}{2^{r-2}} \,\, \mbox{and} \,\, \l_{1t}= \dfrac{-1}{2^{r-1}}$. It follows that $R_{\l_{1,\, t-k}}(F)(y)$ is irreducible in $\F_{\ph_1}[y]$ as it is of degree $1$ for $k=0,1,2$. Hence, $F(x)$ is $2$-regular. By applying Theorem \ref{ore}, one has: $$2 \Z_K = \pF_{1, t-2,  1}^{2^{r-1}} \cdot  \pF_{1, t-1, 1}^{2^{r-2}} \cdot  \pF_{1,  t, 1}^{2^{r-3}}  \cdot \aF,$$ where $\aF$ is a proper ideal of $\Z_K$ provided by the other sides of $\npp{F}$, and $\pF_{1, t-k, 1}$ is a prime ideal of $\Z_K$ of residue degree  $f(\pF_{1,  t-k, 1}/2) = 1 $ for $k=1,2,3$. So, there are at least  three prime ideals of $\Z_K$ of residue degree $1$ each lying above $2$. By Lemma \ref{comindex}, $2$ divides $i(K)$. Consequently, $K$ cannot be monogenic.
	\end{enumerate}
	\end{proof}
\newpage 
	To illustrate our results, we give some numerical examples.
	\begin{examples}\
	\\Let $K=\Q(\th)$ be a number field generated by a root of a monic  irreducible polynomial $F(x)=x^{2^r}+ax^m+b$. 
	
	\begin{enumerate}
	\item  If $F(x)= x^8+12x+3$, then $F(x)$ is irreducible as it is a $3$-Eisenstein polynomial. By Theorem \ref{npib}(1), the field $K$ cannot be monogenic. Further,  since the polynomial $F(x)$ is $2$-regular, by  \cite{Nar}, a $2$-integral basis of $K$ is given by $(1, \th,  \th^2, \th^3, \frac{35+\th^4}{2}, \frac{21 + 15 \th + 3 \th^4 + \th^5}{2}, \frac{7+ 5 \th^2+ 3 \th^4 + 2 \th^5+ \th^6}{2}, \frac{13+ \th+ \th^2+ \th^3+ \th^4+ \th^5+ \th^6+ \th^7}{4} ).$\\
	Let $I_2(x_2, x_3, \ldots, x_8) = \pm 1$ be the  $2$-index form equation associated to the above $2$-integral basis of $K$ which is a Diophantine equation of degree $28$, where the coefficients are in $\Z_{(2)}$, the localization of $\Z$ at $p=2$ (see \cite{DS}).  This equation has no solution due to $K$ being non-monogenic. 
	\item   If $F(x)= x^{16}+24x^{15}+8$, then $\Delta(F) = 2^{90} \times 7  \times  43 \times  29778 017378 311761 855723 790106 195659$. By Corollary \ref{cor}, $F(x)$ is not monogenic, but $K$ is monogenic and $\a = \frac{\th^{11}}{4}$ generates a power integral basis of $\Z_K$.
	\item  The pure number field $\Q(\sqrt[64]{65})$ is not monogenic.
	
	\end{enumerate}
	
	\end{examples}
	
%\section*{Declarations}
%\textbf{Conflict of interest:} There is no conflict of interest related to this paper. The author have freely chosen this journal without any consideration.
%\section*{Data availability} 
%Not applicable.

\end{document}